\numberwithin{equation}{section}
\numberwithin{equation}{section}
\renewcommand\Im{\operatorname{Im}}
\newcommand\ddfrac[2]{\frac{\displaystyle #1}{\displaystyle #2}}
\newtheorem{Thm}{Theorem}[section]
\newtheorem{Lemma}[Thm]{Lemma}
\theoremstyle{remark}
\begin{document}

\title[PDEs and hypercomplex analytic functions]{PDEs and hypercomplex analytic functions}
\author{David Harper}
\address{Florida International University,
Department of Mathematics,
Miami, FL 33199, USA}
\email{dharp012@fiu.edu}
\subjclass[2016]{}
 \begin{abstract}
Hypercomplex numbers are unital algebras over $\mathbb R$. We offer a short demonstration of the practical value of  hypercomplex analytic functions in the field of partial differential equations.

\end{abstract} 
 
\maketitle

\section{Introduction}
We show that if complex analytic functions and the complex numbers themselves had not been discovered before the study of the two-dimensional Laplace equation, $\Delta h =  h_{xx} + h_{yy} = 0$, then they would have likely been discovered as a result of it. In this paper, a hypercomplex analytic function is an element of a class of differentiable functions, $C^1 (A)$, over a two-dimensional commutative unital algebra, $A$. Although, hypercomplex analytic functions have appeared in the literature, e.g. \cite{Ketchum},  \cite{Ward2}, we will explore the connections between these atypical theories of differentiable functions and certain partial differential equations. We define $\Gamma_{\alpha,\beta}$ as the partial differential operator,
\begin{center}
$\Gamma_{\alpha,\beta} = \partial_{x}^2 + \alpha \partial_{x}\partial_{y} + \beta \partial_{y}^2$ with $\alpha,\beta \in \mathbb R$.
\end{center} 
We have selected $\Gamma_{\alpha,\beta}$ in this manner as it is a generalization of the harmonic operator $\Delta = \partial_x^2 + \partial_y^2$, in the special case, $\Gamma_{0,1}$. Now, we introduce and utilize these new classes of differentiable functions to answer the following important questions;

\begin{itemize}

\item[1.]	What kind of continuous change of variables preserve solutions to the problem,
\begin{equation} \label{Eqn1}
 \Gamma_{\alpha,\beta}h = 0 
\end{equation}
for $h=h(x,y)$ and $(x,y) \in \Omega \subseteq \mathbb R^2$?

\item[2.]	What is a general form of solution to the generalized Bi-harmonic Equation,

\begin{equation} \label{Eqn2}
 \Gamma_{\alpha,\beta}^2 h = 0 
 \end{equation} 
 for $h=h(x,y)$ and $(x,y) \in \Omega \subseteq \mathbb R^2$ \label{Eqn2}

\end{itemize}

In fact, we have chosen to select these questions as they are naturally answered in the case of the harmonic operator via the use of complex analytic functions, thus, rendering the results obtained in the case $\Gamma_{\alpha,\beta}$ true generalizations.  

We define the commutative algebra, 
\begin{center}
$A_{ \lambda_1 , \lambda_2 } = \{ t + sj | (t,s) \in \mathbb R^2 \}$ where, 
$ j^2 = - \lambda_2  - \lambda_1 j $ and $j \not\in \mathbb R$. 

\end{center}

The numbers $z = t + sj \in A_{ \lambda_1 , \lambda_2 }$ will have a {\it real} part, $Re_{ \lambda_1 , \lambda_2 }(z) = t$, and an {\it imaginary } part $Im_{ \lambda_1 , \lambda_2 }(z) = s$.

The conjugate will also be defined in the analogous manner, 
\begin{center}
$\bar{z}^{ \lambda_1 , \lambda_2 } = t - sj$.
\end{center}

Over these algebras we may define functions,
\begin{center}
$ F : A_{ \lambda_1 , \lambda_2 }  \rightarrow A_{ \lambda_1 , \lambda_2 } $, \\
$ F(x,y) = u(x,y) + jv(x,y), $ 
\\

\end{center}
where $u,v$ are functions from $\mathbb{R}^2$ into $\mathbb{R}$.
\\
Such functions will be called $ A_{ \lambda_1 , \lambda_2 }$-differentiable if they satisfy the generalized Cauchy-Riemann conditions,
\begin{center} 
$ u_x = v_y + \lambda_1 v_x, $ \\ 
$u_y = - \lambda_2 v_x. $  
\end{center}

See e.g. \cite{Ward1} for related work on generalized Cauchy-Riemann conditions.
If the reader replaces $\lambda_1 = 0 , \lambda_2 = 1$, they will obtain the system of complex numbers and complex analytic functions.  

\subsection{Main Results}
We will now list the main results which serve as answers to our primary questions. 
The following theorem answers question $1$. 

\begin{Thm}  \label{thm1}
Let $\Omega$ be a domain in $\mathbb R^2$ and let $h: \Omega \rightarrow \mathbb R$ be a solution to,
\begin{center}
$ \Gamma_{\alpha,\beta}h = 0 $
\end{center}

for all $(x,y) \in \Omega$ and let $F = (u,v): \Omega \rightarrow \mathbb R^2 $. Then, the composition $H = h \circ F$ is also a solution to \ref{Eqn1} if, $F = u + jv $ defines an $ A_{ \frac{\alpha}{\beta},\frac{1}{\beta} }$-differentiable function on the region $\Omega$. 
\end{Thm}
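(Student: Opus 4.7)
\bigskip

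\noindent\textbf{Proof plan.} The plan is to reduce everything to a chain-rule computation and a matching of coefficients. First I would rewrite the generalized Cauchy--Riemann system with the parameters $\lambda_1 = \alpha/\beta$, $\lambda_2 = 1/\beta$ in the convenient form
\begin{equation*}
v_x = -\beta u_y, \qquad v_y = u_x + \alpha u_y,
\end{equation*}
which just solves the given system for the $v$-derivatives in terms of the $u$-derivatives. From here, differentiating these two identities and equating the two expressions for $v_{xy}$ (using Clairaut on $v$) immediately yields $u_{xx} + \alpha u_{xy} + \beta u_{yy} = 0$, and plugging that back in gives the same for $v$. Thus as a free-of-charge preliminary I obtain $\Gamma_{\alpha,\beta} u = \Gamma_{\alpha,\beta} v = 0$.

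Next I would expand $H(x,y) = h(u(x,y),v(x,y))$ by the chain rule to second order, grouping the result as
\begin{equation*}
\Gamma_{\alpha,\beta} H = P\, h_{uu} + Q\, h_{uv} + R\, h_{vv} + (\Gamma_{\alpha,\beta} u)\, h_u + (\Gamma_{\alpha,\beta} v)\, h_v,
\end{equation*}
where $P,Q,R$ are explicit quadratic expressions in the first partials of $u$ and $v$. The last two terms vanish by the preliminary step, so everything comes down to showing $P : Q : R = 1 : \alpha : \beta$; if that ratio holds with common factor $A$, then the remaining sum is $A\,\Gamma_{\alpha,\beta}h$ evaluated at $F$, which vanishes by hypothesis on $h$.

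The heart of the argument, and the only place where the specific choice of parameters $\lambda_1 = \alpha/\beta$, $\lambda_2 = 1/\beta$ is really used, is this algebraic matching. I would substitute $v_x = -\beta u_y$ and $v_y = u_x + \alpha u_y$ into
\begin{align*}
P &= u_x^2 + \alpha u_x u_y + \beta u_y^2, \\
Q &= 2 u_x v_x + \alpha(u_x v_y + u_y v_x) + 2\beta u_y v_y, \\
R &= v_x^2 + \alpha v_x v_y + \beta v_y^2,
\end{align*}
and expand. After cancellation one finds $Q = \alpha P$ and $R = \beta P$; this is where the coefficients $\alpha/\beta$ and $1/\beta$ in the Cauchy--Riemann system have been engineered precisely to make the cross terms line up. Once these two identities are in hand, the conclusion $\Gamma_{\alpha,\beta} H = 0$ is immediate.

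The main obstacle is really just bookkeeping in this last expansion: the identities $Q = \alpha P$ and $R = \beta P$ are not deep, but they are the only nontrivial content of the theorem and need to be verified carefully. Everything else — the reduction of the Cauchy--Riemann system, the verification that $u$ and $v$ are themselves solutions of $\Gamma_{\alpha,\beta}=0$, and the chain-rule expansion — is mechanical.
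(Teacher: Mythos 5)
Your proposal is correct and follows essentially the same route as the paper: chain-rule expansion of $\Gamma_{\alpha,\beta}H$, elimination of the $h_u,h_v$ terms via $\Gamma_{\alpha,\beta}u=\Gamma_{\alpha,\beta}v=0$, and the algebraic verification that the coefficients of $h_{uu},h_{uv},h_{vv}$ stand in ratio $1:\alpha:\beta$. The only cosmetic difference is that the paper identifies your common factor $P=u_x^2+\alpha u_xu_y+\beta u_y^2$ as the Jacobian determinant $|J(F)|$, which you leave unnamed.
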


The next theorem answers question 2. 
\begin{Thm} \label{thm2}
Let $f = u + jv$, and $g = k + jw$ be $ A_{ \frac{\alpha}{\beta},\frac{1}{\beta} }$-differentiable functions on a domain $\Omega $ in  $\mathbb R^2$. Then, $h(x,y) = \Im_{ \lambda_1 , \lambda_2 }( \bar{z}^{ \lambda_1 , \lambda_2 }f(z) + g(z) ),$  is a solution to the equation, 
\begin{center}
$ \Gamma_{\alpha,\beta}^2 h  = 0 $ 
\end{center}
on the region, $\Omega$.

\end{Thm}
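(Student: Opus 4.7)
The plan is to exploit the linearity of $\Gamma_{\alpha,\beta}^2$ and treat the two summands $\phi = \Im_{\lambda_1,\lambda_2}(\bar z^{\lambda_1,\lambda_2} f(z))$ and $\psi = \Im_{\lambda_1,\lambda_2}(g(z)) = w$ separately, where throughout I set $\lambda_1 = \alpha/\beta$ and $\lambda_2 = 1/\beta$. The key preliminary observation — which I would establish first — is that the real and imaginary parts of any $A_{\lambda_1,\lambda_2}$-differentiable function $F = u + jv$ are themselves annihilated by $\Gamma_{\alpha,\beta}$. This is immediate from Theorem \ref{thm1} applied to the two trivial $\Gamma_{\alpha,\beta}$-harmonic functions $h(x,y)=x$ and $h(x,y)=y$, which, pulled back through $F$, produce $u$ and $v$. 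Consequently $\Gamma_{\alpha,\beta} u = \Gamma_{\alpha,\beta} v = \Gamma_{\alpha,\beta} k = \Gamma_{\alpha,\beta} w = 0$, and in particular $\Gamma_{\alpha,\beta}^2 \psi = 0$ without further effort.

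All the real content lies in controlling $\phi$. First I would multiply out $\bar z^{\lambda_1,\lambda_2} f(z) = (x - yj)(u+jv)$ using $j^2 = -\lambda_2 - \lambda_1 j$ and collect the coefficient of $j$, yielding
\[
\phi(x,y) = xv - yu + \lambda_1\, y v.
\]
Applying $\Gamma_{\alpha,\beta}$ termwise via the Leibniz rule for $\partial_x^2$, $\partial_x\partial_y$, $\partial_y^2$, every contribution of the form $x\,\Gamma_{\alpha,\beta}(\cdot)$ or $y\,\Gamma_{\alpha,\beta}(\cdot)$ drops out by the preliminary step. What remains is a constant-coefficient combination (in $\alpha,\beta,\lambda_1$) of the first-order derivatives $u_x, u_y, v_x, v_y$. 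I would then invoke the generalized Cauchy-Riemann relations $u_x = v_y + \lambda_1 v_x$ and $u_y = -\lambda_2 v_x$, together with the dictionary $\alpha = \lambda_1\beta$, $\beta\lambda_2 = 1$, to eliminate the $u$-derivatives. The target is a clean reduction to the form $c_1 v_x + c_2 v_y$ (a short computation gives the explicit values $c_1 = 4$, $c_2 = 2\alpha$).

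Once $\Gamma_{\alpha,\beta}\phi$ has been identified with such a linear combination of $v_x$ and $v_y$, the proof finishes immediately: because $\Gamma_{\alpha,\beta}$ has constant coefficients, it commutes with $\partial_x$ and $\partial_y$, so
\[
\Gamma_{\alpha,\beta}\!\left(c_1 v_x + c_2 v_y\right) = c_1 \partial_x\!\left(\Gamma_{\alpha,\beta} v\right) + c_2 \partial_y\!\left(\Gamma_{\alpha,\beta} v\right) = 0.
\]
Combining with $\Gamma_{\alpha,\beta}^2 \psi = 0$ yields $\Gamma_{\alpha,\beta}^2 h = 0$ on $\Omega$.

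The main obstacle is purely computational: the Leibniz expansion of $\Gamma_{\alpha,\beta}\phi$ spawns a handful of first-order terms, and one must carefully apply the Cauchy-Riemann identities with the specific substitutions $\lambda_1 = \alpha/\beta$, $\lambda_2 = 1/\beta$ to see the $u$-contributions cancel the matching $v$-contributions, leaving only pure derivatives of $v$. If this cancellation failed — say, it left a surviving multiple of $v$ itself — the second application of $\Gamma_{\alpha,\beta}$ would not kill the residue, so this reduction is the structural heart of the argument and the only nontrivial check.
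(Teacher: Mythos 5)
Your proposal is correct, and it reaches the conclusion by a genuinely cleaner route than the paper. The paper expands $\Gamma_{\alpha,\beta}^2 h$ in full as a fourth-order expression, groups the resulting terms by their $x$-, $y$-, $w$-, and constant-coefficient factors, kills the large groups using $\Gamma_{\alpha,\beta}^2 u = \Gamma_{\alpha,\beta}^2 v = \Gamma_{\alpha,\beta}^2 w = 0$, and then grinds the leftover third-order terms down to $4\,\partial_x \Gamma_{\alpha,\beta} v = 0$ via the generalized Cauchy--Riemann relations. You instead factor the operator application: after verifying $\Im_{\lambda_1,\lambda_2}(\bar z^{\lambda_1,\lambda_2} f) = xv - yu + \lambda_1 yv$ (which checks out against $j^2 = -\lambda_2 - \lambda_1 j$), a single Leibniz pass with $\Gamma_{\alpha,\beta} u = \Gamma_{\alpha,\beta} v = 0$ (Lemma \ref{lemma1} --- you can cite it directly rather than rederiving it from Theorem \ref{thm1} with the projections $h=x$, $h=y$, though that derivation is also valid since the Lemma is proved independently) leaves the first-order residue $4v_x + 2\alpha v_y$; I have verified the coefficients $c_1 = 4$, $c_2 = 2\alpha$. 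The second application of $\Gamma_{\alpha,\beta}$ then vanishes because constant-coefficient operators commute, and the $g$-summand contributes $w$ with $\Gamma_{\alpha,\beta}^2 w = 0$ for free. What your organization buys is the complete avoidance of fourth-derivative bookkeeping and a reusable intermediate identity, $\Gamma_{\alpha,\beta}\bigl(\Im_{\lambda_1,\lambda_2}(\bar z^{\lambda_1,\lambda_2} f)\bigr) = 4v_x + 2\alpha v_y$, which is the structural content hidden inside the paper's computation; the paper's brute-force expansion, by contrast, verifies the same cancellations but obscures why they occur. (Your final residue differs in form from the paper's $4\,\partial_x\Gamma_{\alpha,\beta} v$ only because the paper performs additional Cauchy--Riemann substitutions before the last step; both expressions vanish.)
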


Again, if the reader considers the complex-harmonic case, $\lambda_1 = 0 , \lambda_2 = 1$, then they will see familiar theorems, \cite{Charles},\cite{{Rudin}},\cite{Kre}. 
\\
The following Lemma is very useful and as well constitutes an informative generalization of complex analytic functions.
\begin{Lemma} \label{lemma1}
Let $F = u + j v$ be an $ A_{ \frac{\alpha}{\beta},\frac{1}{\beta} }$-differentiable function.
\\ Then, 
\begin{center}
$\Gamma_{\alpha,\beta}u = \Gamma_{\alpha,\beta}v = 0$.
\end{center} 
\end{Lemma}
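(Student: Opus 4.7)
The plan is to rewrite the generalized Cauchy--Riemann conditions so that all derivatives of $v$ are expressed in terms of derivatives of $u$ (and vice versa), then differentiate once more and apply equality of mixed partials. Specializing the definition to $\lambda_1 = \alpha/\beta$ and $\lambda_2 = 1/\beta$, the hypothesis on $F = u+jv$ reads
\begin{align*}
u_x &= v_y + \tfrac{\alpha}{\beta} v_x, \\
u_y &= -\tfrac{1}{\beta} v_x.
\end{align*}
The second equation gives $v_x = -\beta u_y$, and substituting into the first yields $v_y = u_x + \alpha u_y$. So I would isolate these two clean identities as the working form of the Cauchy--Riemann system.

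Next, I would differentiate the identity $v_x = -\beta u_y$ in $y$ to get $v_{xy} = -\beta u_{yy}$, and differentiate $v_y = u_x + \alpha u_y$ in $x$ to get $v_{yx} = u_{xx} + \alpha u_{xy}$. Equating $v_{xy}$ with $v_{yx}$ (which is legitimate under enough regularity, see the remark below) gives exactly
\[
u_{xx} + \alpha u_{xy} + \beta u_{yy} = 0,
\]
which is $\Gamma_{\alpha,\beta} u = 0$.

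For $v$, the analogous step is just bookkeeping: differentiate $v_x = -\beta u_y$ in $x$, differentiate $v_y = u_x + \alpha u_y$ in $y$, use $v_{xy} = -\beta u_{yy}$, and sum with the weights $1,\alpha,\beta$. The $u_{xy}$ and $u_{yy}$ terms cancel in pairs, leaving $\Gamma_{\alpha,\beta} v = 0$.

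The only real obstacle is a regularity issue rather than an algebraic one. As stated, $A$-differentiability is formulated at the $C^1$ level, but the calculation above tacitly uses $u,v \in C^2$ to swap mixed partials. I would handle this either by noting that the conclusion $\Gamma_{\alpha,\beta} u = 0$ should be read in the distributional sense (where the cross-differentiation is always valid), or by strengthening the standing assumption to $F \in C^2$, which is customary in this circle of results and costs nothing in the applications to Theorems \ref{thm1} and \ref{thm2}.
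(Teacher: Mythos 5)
Your proposal is correct and follows essentially the same route as the paper: cross-differentiate the generalized Cauchy--Riemann equations and equate mixed partials, the only difference being that you invert the system to eliminate $v$ and derive $\Gamma_{\alpha,\beta}u=0$ first, while the paper derives $\Gamma_{\alpha,\beta}v=0$ directly and handles $u$ ``similarly.'' Your closing remark on the $C^1$ versus $C^2$ regularity needed to swap mixed partials is a fair point that the paper passes over silently.
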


\begin{proof} \ref{lemma1}
Consider the generalized Cauchy-Riemann condition. 
\begin{center} 
$ u_x = v_y + \ddfrac{ \alpha }{ \beta } v_x $  and $u_y = - \ddfrac{ 1 }{ \beta } v_x $  
\end{center}
and so,
\begin{center}
$ v_{yy} + \ddfrac{ \alpha }{ \beta } v_{xy} = - \ddfrac{ 1 }{ \beta } v_{xx}$
\end{center}
gives,  
\begin{center}
$  \Gamma_{\alpha,\beta}v = v_{xx}+ \alpha v_{xy} + \beta v_{yy} = 0$.
\end{center}
Then, similarly, we infer,  $\Gamma_{\alpha,\beta}u = 0$

\end{proof}

\section{Proofs of presented results}

\begin{proof}[ Proof \ref{thm1}] 
Begin by calculating the derivatives and grouping terms.

$(u_{xx}+\alpha u_{xy} + \beta u_{yy} )h_u + (u_x^2+\alpha u_x u_y + \beta u_y^2)h_{uu} + (2u_xv_x+\alpha(u_xv_y+u_yv_x) +  2\beta u_y v_y )h_{uv} + (v_x^2 + \alpha v_x v_y + \beta v_y^2 )h_{vv} + (v_{xx} + \alpha v_{xy} + \beta v_{yy} )h_v  = **$
Then, recall by Lemma \ref{lemma1}, both $u$ and $v$ are solutions to the equation \ref{Eqn1}. Then we begin considering each factor of $h_{uu},h_{uv},h_{vv}$. We apply different variations of the Generalized Cauchy-Riemann conditions to see that the terms all simplify nicely.
\begin{itemize}

\item $u_x^2+\alpha u_x u_y + \beta u_y^2 = u_x(u_x+\alpha u_y)-u_y(-\beta u_y) = | J(F) | $

\item $ 2 u_x v_x+\alpha(u_x v_y+u_y v_x) +  2 \beta u_y v_y = \alpha \big ( \ddfrac{2}{ \alpha }(v_y - \alpha u_y )v_x + u_x v_y +  v_x u_y + \ddfrac{ 2 \beta }{ \alpha } ( \ddfrac{-1}{ \beta }v_x) v_y  \big ) = \alpha | J(F) |   $

\item $v_x^2+\alpha v_x v_y + \beta v_y^2 = \beta \big( - ( -\ddfrac{1}{\beta} v_x ) v_x +  ( \ddfrac{\alpha}{\beta}v_x + v_y )v_y \big ) = \beta |J(F) | $

\end{itemize}
Then so, 

\begin{center}
$ ** = | J(F) |  h_{uu} + \alpha | J(F) | h_{uv} +\beta |J(F) | h_{vv} = |J(F) | \big (  h_{uu} + \alpha h_{uv} +\beta h_{vv} \big ) = 0 $
\end{center}

Thus completing the proof.
\end{proof}


\begin{proof}[ Proof \ref{thm2}] 
Begin by calculating derivatives of $h(x,y) = \Im_{ \lambda_1 , \lambda_2 }( \bar{z}^{ \lambda_1 , \lambda_2 }f(z) + g(z) )$ and grouping terms.
If we denote $z = x+jy$ with $x,y \in \mathbb{R}$, then we first group terms by factors of $y$,
\begin{center}
$\ddfrac{ \alpha }{ \beta }  v^{(4,0)} -  u^{(4,0)} + 2  \ddfrac{ \alpha^2 }{ \beta }  v^{(3,1)}  - 2 \alpha  u^{(3,1)} + (\alpha^2 + 2 \beta)\ddfrac{ \alpha }{ \beta }  v^{(2,2)}  -  (\alpha^2 + 2 \beta) u^{(2,2)} + 2 \alpha^2 v^{(1,3)} -  2 \alpha \beta  u^{(1,3)} + \alpha \beta   v^{(0,4)}  - \beta^2  u^{(0,4)} = \ddfrac{ \alpha }{ \beta } \big (   v^{(4,0)} + 2  \alpha  v^{(3,1)} + (\alpha^2 + 2 \beta) v^{(2,2)}+ 2 \alpha \beta v^{(1,3)}+ \beta ^2  v^{(0,4)} \big )- \big (u^{(4,0)} + 2 \alpha  u^{(3,1)} +  (\alpha^2 + 2 \beta) u^{(2,2)} + 2 \alpha \beta  u^{(1,3)} + \beta^2  u^{(0,4)} \big ) = \ddfrac{ \alpha }{ \beta } \Gamma^2_{ \alpha , \beta } v -  \Gamma^2_{ \alpha , \beta } u = 0 $
\end{center}
In terms of $x$,
\begin{center}
$ v^{(4,0)} + 2 \alpha v^{(3,1)} + (\alpha^2 + 2 \beta )v^{(2,2)} + 2 \alpha \beta v^{(1,3)} + \beta^2 v^{(0,4)} = \Gamma^2_{ \alpha , \beta } v  = 0 $
\end{center}
Then, the function $w$ and its derivatives,
\begin{center}
$ w^{(4,0)} + 2 \alpha w^{(3,1)} + (\alpha^2 + 2 \beta )w^{(2,2)} + 2 \alpha \beta w^{(1,3)} + \beta^2 w^{(0,4)} = \Gamma^2_{ \alpha , \beta } v = 0 $
\end{center}
Finally, the functions $u,v$ and their derivatives with constant factors,
\begin{center}
$ 4v^{(3,0)} + 2 \ddfrac{ \alpha^2 }{ \beta }  v^{(3,0)} - 2 \alpha  u^{(3,0)} + 6 \alpha v^{(2,1)} + 2 \ddfrac{ \alpha^2 }{ \beta } (\alpha^2 + 2\beta) v^{(2,1)} - 2(\alpha^2 + 2\beta) u^{(2,1)} + 2(\alpha^2 + 2\beta) v^{(1,2)} + 6 \alpha^2 v^{(1,2)} - 6 \alpha \beta u^{(1,2)} + 2 \alpha \beta  v^{(0,3)} - 4 \beta^2 u^{(0,3)} + 4 \alpha \beta v^{(0,3)}.$

\medskip

\end{center}

Next we differentiate the Generalized Cauchy Riemann Conditions and substitute $u$,
\begin{center}
$ u^{(1,0)} = v^{(0,1)} +  \ddfrac{ \alpha }{ \beta } v^{(1,0)} $ 

$ u^{(0,1)} = - \ddfrac{ 1 }{ \beta } v^{(1,0)} $ 

\end{center}

\begin{center}
$ 4v^{(3,0)} + 2 \ddfrac{ \alpha^2 }{ \beta }  v^{(3,0)} - 2 \alpha  \big(  v^{(2,1)} + \ddfrac{ \alpha }{ \beta } v^{(3,0)}  \big )  + 6 \alpha v^{(2,1)}+ 2(\alpha^2 + 2\beta) \big( v^{(1,2)}  + \ddfrac{ \alpha }{ \beta } v^{(2,1)} \big ) - 2(\alpha^2 + 2\beta) u^{(2,1)} + 6 \alpha^2 v^{(1,2)} - 6 \alpha \beta \big ( v^{(0,3)} + \ddfrac{ \alpha }{ \beta }v^{(1,2)} \big )+ 2 \alpha \beta  v^{(0,3)} + 4 \beta^2 \big( \ddfrac{ 1 }{ \beta }  v^{(1,2)} \big ) + 4 \alpha \beta v^{(0,3)} = 4v^{(3,0)} + 4 \alpha v^{(2,1)} + 4 \beta  v^{(1,2)}  = 4 \partial_x  \big ( v^{(2,0)} +  \alpha v^{(1,1)} +  \beta  v^{(0,2)} \big ) = 4 \partial_x \Gamma_{ \alpha , \beta } v =  0$  .

\end{center}

This completes the proof. 
\end{proof}

\section{Concluding Remarks}
We hope that this paper has helped to give perspective on the usefulness of the hyper-complex number system and the corresponding differentiable functions. As well, to put perspective on the place of the complex-analytic functions within the mathematical landscape relative to their, often equally significant, counterparts.

\end{document}